\newtheorem{thm}{Theorem}[section]
\newtheorem{lem}{Lemma}[section]
\newtheorem{definition}{Definition}[section]
\newtheorem{proposition}{Proposition}[section]
\newtheorem{remark}{Remark}[section]
\newcommand{\R}{\mathbb{R}}
\newcommand{\ep}{\varepsilon}
\def\p{\paragraph}
\def\ds{\displaystyle}
\def\ov{\overline}
\def\O{\Omega}
\def\w{\rightharpoonup}
\def\p{\partial}
\def\ep{\varepsilon}
\def\ve{\varepsilon}
\def\w{\rightharpoonup}
\def\CD{\mathcal{D}}
\title[Homogenization-Unfolding- Monotone problem]{Homogenization of a nonlinear monotone problem in a locally periodic domain\\ via unfolding method}
\author{S. Aiyappan}
\address{Fraunhofer Institute for Industrial Mathematics ITWM, Germany}
\email{srinivasan.aiyappan@itwm.fraunhofer.de}
\author{G. Cardone}
\address{Universit\`{a} del Sannio, Dipartimento di Ingegneria, Corso Garibaldi, 107, 82100 Benevento, Italy}
\email{gcardone@unisannio.it}
\author{C. Perugia}
\address{Universit\`{a} del Sannio, Dipartimento di Scienze e Tecnologie, Via F. De Sanctis, Palazzo ex-Enel, 82100 Benevento,
Italy}
\email{cperugia@unisannio.it}
\author{R. Prakash}
\address{Department of Mathematics, Faculty of Physical Sciences and Mathematics, University of Concepcion, Chile}
\email{rprakash@udec.cl}
\subjclass[2000]{80M35, 80M40, 35B27}
\date{\today}
\keywords{Homogenization, asymptotic analysis, periodic unfolding, locally periodic boundary, oscillating boundary, monotone Operators.}
\begin{document}

\begin{abstract}
In this paper, the asymptotic behavior of the solutions of a monotone problem posed in a locally periodic oscillating domain is studied. Nonlinear monotone boundary conditions are imposed on the oscillating part
of the boundary where as the Dirichlet condition is considered on the smooth separate part. Using the unfolding method, under natural hypothesis on the regularity of the
domain, we prove the weak $L^2$-convergence
of the zero-extended solutions  of the nonlinear problem and their flows to the solutions of a limit distributional problem.
\end{abstract}

\maketitle

\section{Introduction}\label{Intro}

This paper is concerned with the study of asymptotic analysis of a monotone problem posed on a locally periodic oscillating domain $\Omega_\ve =  \big\{ x\in \mathbb{R}^2 : 0 < x_1 < 1 ,\,\, 0 < x_2 < \eta\big(x_1, \frac{x_1}{\ve}\big) \big\},$ where $\eta$ is a locally periodic function. A non linear monotone boundary value problem has been posed on this domain $\Omega_\ve$ and its limit behavior has been analyzed when the small parameter $\ve \in \R^+$ approaches zero.  The unfolding technique has been used to analyze the limit behavior of the nonlinear problem. The novelty of this paper is understanding a nonlinear problem on a locally periodic domain whereas most of the works are linear and on periodic domains. Here the effectiveness of the unfolding technique to understand the non-linear problem will be shown while analyzing the problem. The main result consists in proving the weak $L^2$-convergence
of the zero-extended solutions  of the problem \eqref{WF} and their flows to the limit functions solving the limit problem \eqref{eq:limitproblem}, that is a distributional problem due to the presence of the function $h$ (the density of $\Omega_\varepsilon$ in $\Omega$, given in (\ref{density}) that is zero a.e.

Many physical structures can be modeled using such oscillating domains as they involve multi scales (certain parts of the structures are too small compared to the whole structure). For example heat radiators \cite{Mel-JMAA-15}, the propeller of jet engines, comb drive in micro-electro-mechanical systems, etc., Homogenization comes into play when one wants to model these structure and study their physical properties such as heat, electric, and magnetic conduction, fluid structure interaction etc. As one can see the direct numerical schemes will be impossible to implement as it involves multi scales, the homogenization helps to ease the problem.

Homogenization of boundary value problems posed on periodic oscillating rough domains has been initiated by Brizzi and Chalot with their the pioneering works in the late seventies \cite{BriCha78}. Then this direction of homogenization has attracted many mathematicians till date. There is a large literature of homogenization of such structures. Brizzi and Chalot have used extension operator to study Poisson equation on periodic oscillating domains \cite{BriCha78}. In 90's Gaudiello has studied Neumann problem with non-homogeneous boundary data \cite{Ga-RdM-94}, Kozlov,  Maz'ya, and Movchan studied such problems with the help of asymptotic expansion in the name of multi-structures \cite{KozMazMov-94}, and Nazarov analyzed in the name of singularly degenerating domains \cite{Naz-96}. Mel'nyk and his collaborators have contributed many works on this direction using asymptotic expansion method \cite{Mel1999,MelVash,Mel-JMAA-15,GaMe-JDE-18, Melnik1997,DeMaio2005,Durante2012, Melnyk2015, Gaudiello2019}. All these above works are of pillar type periodic oscillations except a few. There are some works on non uniform pillar type, that is the thickness or the cross section of the pillar changes when the height changes, see \cite{Ga-RdM-94,AiNaRa-CV,AiNaRa-CCM, Melnyk2008, Mahadevan2020} .  

The literature on locally periodic or non-periodic oscillating domains is very few. In \cite{GaGuMu-ARMA}, using Tartar's oscillating test functions method, the authors study the homogenization of Poisson problem on a non-periodic oscillating domain where the base of each uniform pillar is allowed to be non-flat. An elliptic problem with non-homogeneous non-linear boundary condition posed on a locally periodic oscillating boundary has been analyzed using a modified unfolding operator technique in \cite{AiNaRa-AMPA}. In \cite{AiKl-PP}, a locally periodic domain has been analyzed extensively with its full generality. Also see \cite{DaPe-DCDS} for locally periodic flat pillar type domains with respect to width and height of the pillars. Asymptotic analysis in thin domains with locally periodic oscillating boundary was conducted in for example  \cite{arrieta2011homogenization,ArVi-SIMA-16,ArVi-JMAA-17,chechkin1999boundary, mel2010asymptotic, pettersson2017two, Borisov2010, akimova2004, Arrieta2020}.

There are few works on non-linear problems on such domains though with more specific or more restriction on the non-linearity. In \cite{EspDonGauPic}, p-Laplacian on pillar type oscillating domains has been studied Tartar's method. Asymptotic expansion method is used in \cite{Mel-JMAA-15} to study an elliptic problem with non-linear zeroth order term and boundary data. In \cite{BlaCarGau}, a monotone problem on such periodic domain has been analyzed using Tartar's method whereas we will study the monotone problem on a locally periodic set up using unfolding technique.

Among various techniques developed to study periodic homogenization, the periodic unfolding is the recent one introduced in 2002 by  Cioranescu, Damlamian, and Griso \cite{CiDaGr-unfolding-book}, see also \cite{Cioranescu2002, CiDaGr-SIMA-08}. This method is closely related to the notion of two-scale convergence (see \cite{Ngue, allaire1992homogenization, zhikov2004two}.
Then there are different variations and modifications of the method for various problems. The method was adopted for pillar type periodic domains by Blanchard, Gaudiello, and Griso in \cite{BlGaGr-JMPA-1} and \cite{BlGaGr-JMPA-2}  and by  Damlamian and Pettersson \cite{DaPe-DCDS}. In \cite{AiNaRa-CV}, the unfolding was  modified to understand non-uniform pillar type oscillations and later for locally periodic domains \cite{AiKl-PP}. 

The rest of the article is organized as follows. In Section 2, problem description and main results are provided. Section 3 is devoted to discuss the unfolding operator and $a~priori$ estimates on the solution sequence. The proof of the main theorem is given in Section 4.  

\section{Setting of the problem and main result}\label{domain}
Let $\mathbb{T}$ denote the one-dimensional torus realized with unit measure and let $\eta : [0,1] \times \mathbb{T} \to \mathbb{R}$ be a strictly positive Lipschitz function, periodic in the second variable. Denote  any element $x\in \mathbb{R}^2$ as $x=(x_1,x_2)$ and for each $\ve = 1/k$, $k = 1, 2, \ldots$, we consider the Lipschitz domain with periodically oscillating boundary defined by
\begin{align*}
\Omega_\varepsilon & = \big\{ x\in \mathbb{R}^2 : 0 < x_1 < 1 ,\,\, 0 < x_2 < \eta\big(x_1, \frac{x_1}{\ve}\big) \big\},
\end{align*}
whose bottom boundary is given by $\Gamma_b= [0,1] \times \{0\}$.

In terms of the Lipschitz functions
\begin{align*}
\eta_-(x) & = \min_{y\in \mathbb{T}} \eta(x,y), &
\eta_+(x) & = \max_{y\in \mathbb{T}} \eta(x,y),
\end{align*}
we define our fixed domain as follows
\begin{align*}
\Omega & = \{ x \in \mathbb{R}^2 : 0 < x_1 < 1 , \,\, 0 < x_2 < \eta_+(x_1) \},
\end{align*}
which is separated into the regions
$$
\Omega^- = \{ x \in \mathbb{R}^2 : 0 < x_1 < 1 , \,\, 0 <  x_2 < \eta_-(x_1) \}
$$
and 
$$
\Omega^+ = \{ x \in \mathbb{R}^2 : 0 < x_1 < 1 , \,\,   \eta_-(x_1) <  x_2 < \eta_+(x_1) \},
$$
with interior interface $\Gamma_- = \partial \Omega^- \cap \partial \Omega^+$.

For an illustration of $\Omega_\ve$ and the corresponding $\Omega$, with the regions $\Omega^+$ and $\Omega^-$, and the interface $\Gamma_-$, see Figure~\ref{fig:domain}(a) and Figure~\ref{fig:domain}(b), respectively.

Let $k\in L^{2}(\Omega)$ and $A:\Omega_\varepsilon \times \mathbb{R}^n\to \mathbb{R}^n$ a Carath\'eodory function satisfying the following assumptions:
\begin{enumerate}\label{HpA}
\item [H1)] $A(x, \cdot) $ is strictly monotone for a. e. $x \in \Omega_\ep$,
\item [H2)] $\exists c_0>0$: $A(x, \xi) \cdot \xi \geq c_0 |\xi|^{2} - k(x), \forall \xi\in \mathbb{R}^n$,
\item [H3)] $\exists c_1>0$: $|A(x,\xi)| \leq c_1 |\xi| + k(x)$, for a. e. $x \in \Omega_\ep$ and $\forall \xi\in \mathbb{R}^n$.
\end{enumerate}
For any given function $f\in L^{2}(\Omega)$ and for any fixed $\varepsilon$, let us consider the solution $u^\varepsilon$ to the following mixed boundary value problem:
\begin{equation}\label{NHP-1}
\begin{cases}
\begin{aligned}
&-\text{div} A(x, \nabla u_{\varepsilon}) = f~\text{in}\;\O_{\varepsilon},\\
& u_\ep = 0~\text{on}~\Gamma_b, \\
& A(x, \nabla u_{\varepsilon})  \cdot \nu = 0 \;\text{on}\;\p \Omega_{\varepsilon}\setminus \Gamma_b,
\end{aligned}
  \end{cases}
\end{equation}
where $\nu$ is the unitary outward normal to $\Omega_\varepsilon$.

Let us denote by $H^1(\Omega_\varepsilon, \Gamma_b)$ the space of functions in $H^1(\Omega_\varepsilon)$ with zero trace on $\Gamma_b$. Following the monotone operator theory (see Proposition 5.1 of \cite{showa}) or following the same argument as in \cite{GaMe-JDE-18}, for any fixed $\varepsilon$, we get the existence of a unique solution $u_\varepsilon \in H^1(\Omega_\varepsilon, \Gamma_b)$ of problem \eqref{NHP-1}. \\

Moreover we can introduce the weak formulation of problem \eqref{NHP-1} as follows:
\begin{equation}\label{WF}
\left\{
\begin{array}{l}
\text{Find } u_\varepsilon \in H^1(\Omega_\varepsilon, \Gamma_b) \text{ such that}\\
\\
\displaystyle\int_{\Omega_\ep} A(x, \nabla u_{\varepsilon}) \cdot \nabla \phi ~dx= \int_{\O_\ep} f \phi ~dx,\,\forall \phi\in H^1(\Omega_\varepsilon, \Gamma_b)
\end{array}
\right.
\end{equation}
This kind of monotone problem on a fixed domain with homogeneous Dirichlet condition has been analyzed in \cite{ZhPa-2019}. 
Our goal is to describe the asymptotic behavior of the sequence of solutions $u^\ve$ as $\ve$ tends to zero and prove that it will be approximated by the solution of a problem defined in the fixed domain $\Omega$. 
\begin{figure}[!hb] % !htb
    \centering
    \begin{minipage}{.5\textwidth}
        \centering
        \includegraphics[trim=40mm 10mm 30mm 5mm, clip,height=8cm]{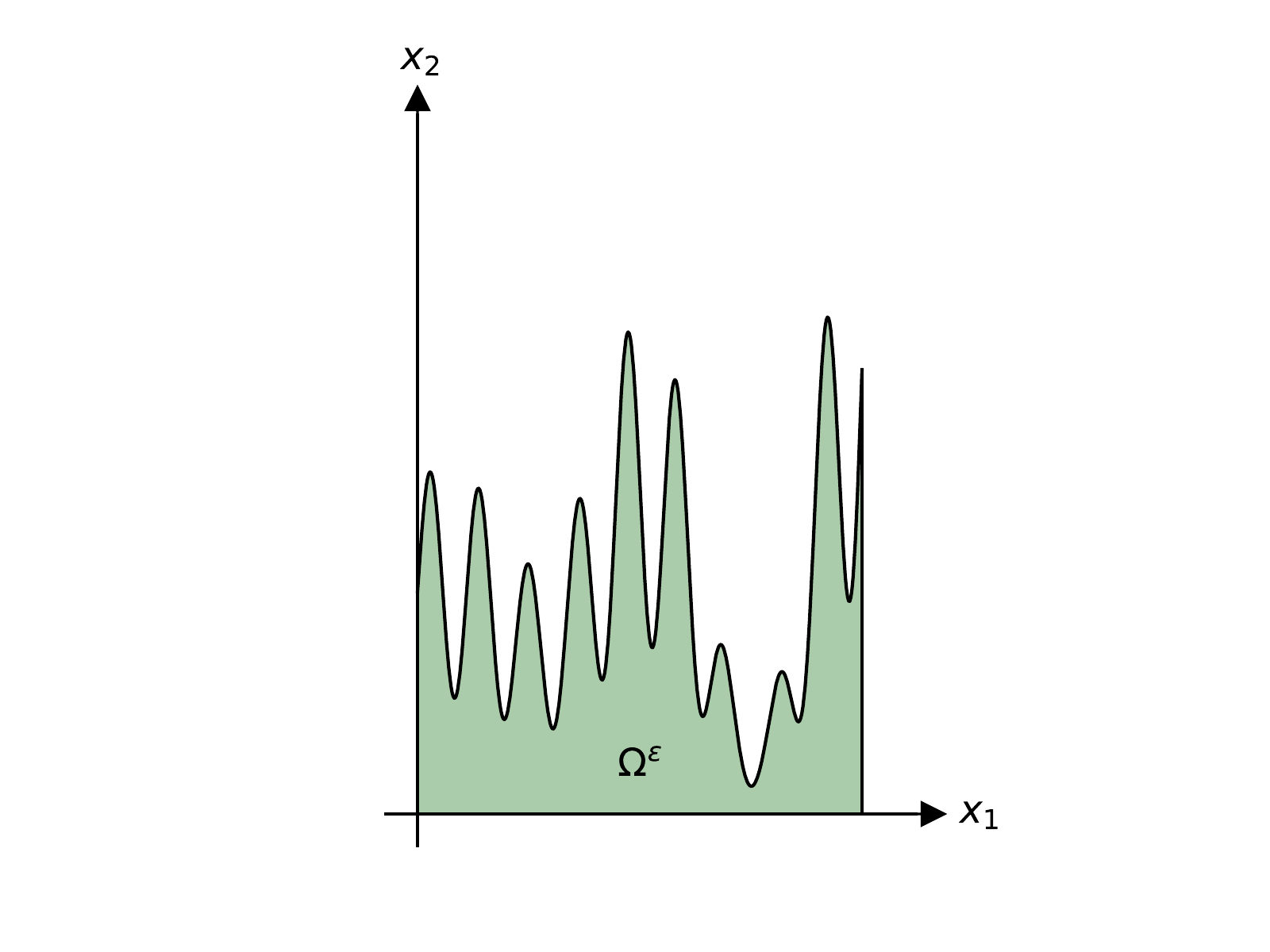}\\
        (a)
    \end{minipage}%
    \begin{minipage}{0.5\textwidth}
        \centering
        \includegraphics[trim=40mm 10mm 30mm 5mm, clip,height=8cm]{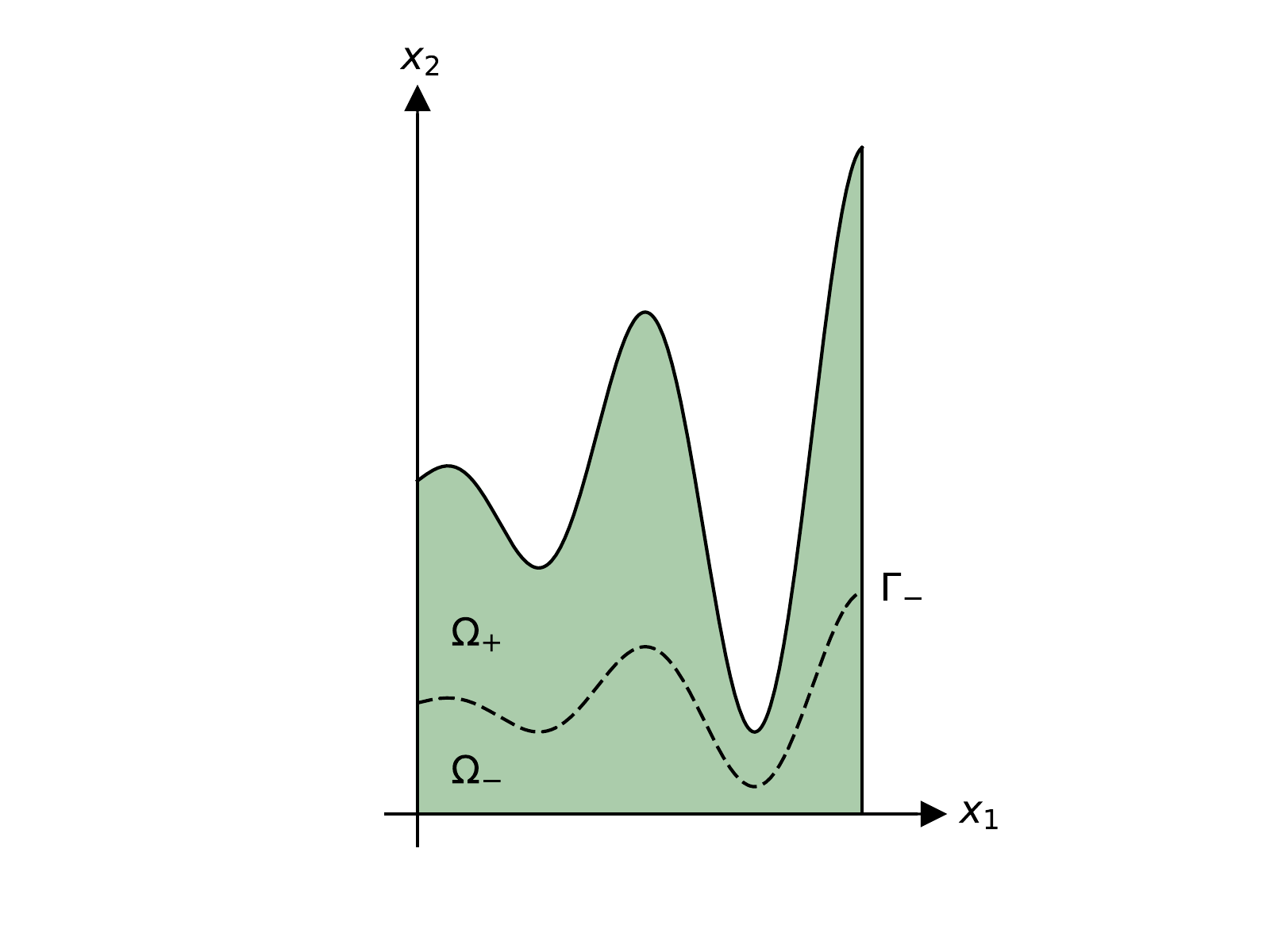}\\
        (b)
    \end{minipage}
    \caption{A locally periodic domain $\Omega^\ve$ (a), $\ve = 1/8$, and the corresponding homogeneous domain $\Omega$ (b),
                 with $\Gamma_-$ marked with a dashed line separating the regions $\Omega_+$ and $\Omega_-$.}
    \label{fig:domain}
\end{figure}
To this aim, let us observe that the assumption that $\eta$ is strictly positive ensures that the segment $\Gamma_b = [0,1] \times \{0\}$
is separated from the graph of $\eta(x_1,x_1/\ve)$, so $\Omega_-$ is a nonempty connected Lipschitz domain.
The subdomains $\Omega^+$ and $\Omega^-$ have been chosen such that $\Omega^+$ covers
the periodic region of $\Omega_\ve$, 
and $\Omega^+$ is of positive measure if $\eta(x,y)$ is non-constant in $y$ for at least one $x$.

Hence, for any $x\in \O_\ep$, we are led to consider the set 
$$Y(x)= \{ y : 0 < x_2 < \eta(x_1, y) \},$$
and to denote by

\begin{equation}\label{density}
h(x)= |Y(x)|
\end{equation}

the so called density of $\Omega_\ve$ in $\Omega$.
Let us observe that $h(x)=1$ for $x\in \Omega^-$ and $h(x)>0$ a. e. $x\in \Omega^+$ which means that the set $M=\{x\in \Omega^+:\,h(x)=0\}$ has zero measure.\\

In what follows, in order to ensure that homogenization takes place, we suppose that $Y(x)$ is connected for $x\in \Omega$ which means that 
there is only one so-called pillar or bump in each period. If we introduce the  Lebesgue space $L^2(\Omega, h)=\left\{ v : \int_{\Omega} v^2 h \,dx < \infty \right\},$ we can define the following Sobolev space
\begin{align*}
W(\Omega, \Gamma_b)
& =
\left\lbrace  v \in L^2(\Omega, h) :
     \frac{\partial v}{\partial x_1} \in L^2(\Omega^-, h)
, \,\,
     \frac{\partial v}{\partial x_2} \in L^2(\Omega, h)
, \,\,
     v = 0 \text{ on } \Gamma_b \right\rbrace.
\end{align*}
Let us observe that W is a Hilbert space with weight $h$. 
\begin{remark}
 Unlike the usual Sobolev spaces, the smooth functions need not to be dense in this weighted Sobolev space for a generic weight $h$. There are different types of necessary conditions given on $h$ by various authors for the density of smooth functions in the weighted Sobolev spaces though there is no sufficient condition.  We refer to \cite{Kufner1980,Necas2012,ChiadoSerra-94} for discussions on weighted Sobolev spaces. In this article, we do not assume any condition on $h$ except that the function $\eta$ is assumed to be Lipschitz continuous and we show the density using the unfolded domain.
\end{remark}

Throughout the paper we use the following notation:
\begin{itemize}
\item with $\widetilde{v}$ we will denote its classical extension by zero to the whole $\O$ of a function $v$ defined on $\Omega_\ep$
\item with $\ov{v}$ we will denote the following function defined in $\O^+$
$$\ov{v}(x)=\int_{Y(x)}v\,dy.$$
\end{itemize}
\medskip

We want to prove the following main result
\begin{thm}\label{tm:homogenization}
Under the assumptions $H1)\div H3)$, let $u_\ve \in H^1(\Omega^\ve, \Gamma_b)$ be the sequence of solutions to \eqref{NHP-1}. Then, there exist $u_0\in W(\Omega, \Gamma_b)$ and $\ov q \in L^2(\O^+)$ such that, as $\ve$ tends to zero, the following convergences hold
\begin{equation}\label{weak}
\begin{array}{ll}
\emph{(i)} \quad \widetilde{u^\ve} \rightharpoonup h u_0 &\quad \text{ weakly in } L^2(\Omega),\\ %{\color{red}\text{ and in } H^1(\Omega_-, \Gamma)},\\
\emph{(ii)} \quad \widetilde{\nabla u^\ve} \rightharpoonup (h\ov{q},h\,\p_2 u_0) &\quad \text{ weakly in } [L^2(\Omega^+)]^2,\\
\emph{(ii)} \quad \nabla u^\ve \rightharpoonup \nabla u_0 &\quad \text{ weakly in } [L^2(\Omega^-)]^2,
\end{array}
\end{equation}
where $\sim$ denotes the classical extension to zero and the pair $(u_0, \ov q)\in W(\Omega, \Gamma_b)\times L^2(\O^+)$ is the unique solution of the following problem
\begin{equation}\label{eq:limitproblem}
\left\{
\begin{array}{ll}
\displaystyle \int_{\O^-} A(x, \nabla u_0)\nabla \phi  ~dx + 
\ds \int_{\O^+} h(x)A_2(x, \ov{q}, \p_2 u_0) \p_2\phi  ~dx = \int_{\O^-} f \phi ~dx + \int_{\O^+}f \phi h~dx\\
\\
h(x)A_1(x,\ov{q},\p_2 u_0)=0~~\text{a.e on } \O^+.
\end{array}
\right.
\end{equation}
%\begin{equation}\label{eq:limitproblem}
%\begin{cases}
%\begin{aligned}
%-\p_2 A_2(x,\ov{q},\p_2 u) & = h f \quad \text{ in } \Omega^+, \\
%-\mop{div}(A \nabla u) & =  f \quad \text{ in } \Omega^-, \\
%u & = 0 \,\,\,\quad \text{ on } \Gamma_b, \\
%A(x, \nabla u) \cdot \nu & = 0 \,\,\,\quad \text{ on } \partial \Omega^- \setminus (\Gamma_b \cup \Gamma_-),\\
%[u] & = 0 \,\,\,\quad \text{ on } \Gamma_-,\\
%A(x, \nabla u^-) \cdot \nu & = A_2(x, \ov{q},\p_2 u^+) \cdot \nu_2 \,\,\,\quad \text{ on } \Gamma_-,\\
%A_1(x,\ov{q},\p_2 u)&=0~~\text{in } \O^+,
%\end{aligned}
%\end{cases}
%\end{equation}
%where the brackets $[\cdot]$ denote jump over $\Gamma_-$.
\end{thm}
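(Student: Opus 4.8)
The plan is to follow the unfolding-plus-monotonicity scheme, the decisive feature being that inside the oscillating region $\O^+$ only the vertical derivative survives as a macroscopic gradient, while the horizontal derivative degenerates into the free corrector $\ov q$. First I would establish uniform a priori estimates: testing \eqref{WF} with $\phi=u_\ve$ and using the coercivity H2) yields
$$c_0\int_{\O_\ve}|\nabla u_\ve|^2\,dx \le \int_{\O_\ve} f u_\ve\,dx + \|k\|_{L^1(\O_\ve)}.$$
Because $u_\ve$ vanishes on $\Gamma_b$ and $\eta$ is strictly positive, a Poincar\'e inequality uniform in $\ve$ (the $x_2$-extent is bounded and $\Gamma_b$ is fixed) lets an absorption argument bound $\|u_\ve\|_{H^1(\O_\ve)}$ independently of $\ve$; the linear growth H3) then bounds the flux $A(\cdot,\nabla u_\ve)$ in $L^2(\O_\ve)$.

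Hence the zero-extensions $\widetilde{u_\ve}$, $\widetilde{\nabla u_\ve}$ and $\widetilde{A(\cdot,\nabla u_\ve)}$ are bounded in $L^2(\O)$ and, along a subsequence, converge weakly; applying the unfolding operator of Section 3 fixes the form of the limits, giving $\widetilde{u_\ve}\w hu_0$ with $u_0\in W(\O,\Gamma_b)$ and the gradient splitting as in (i)--(iii), the vertical part producing $h\,\p_2 u_0$ and the horizontal part the averaged corrector $h\ov q$. Writing $\xi$ for the weak limit of the flux on $\O^-$ and $(\ov\xi_1,\ov\xi_2)$ for its limit on $\O^+$, I would then pass to the limit in \eqref{WF}: testing with functions $\phi\in W(\O,\Gamma_b)$ produces the linear identity of \eqref{eq:limitproblem} with $\xi$ and $\ov\xi_2$ in place of the nonlinear expressions, while testing with rapidly oscillating functions of the form $\phi_\ve(x)\approx\ve\,w(x,x_1/\ve)$, whose horizontal derivative survives and whose value and vertical derivative vanish, isolates the horizontal flux and forces $h\,\ov\xi_1=0$.

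The core is to identify these fluxes through a Minty--Browder device. The essential input is the energy passage, read off from the equation rather than from a product of weak limits,
$$\lim_{\ve\ra 0}\int_{\O_\ve}A(x,\nabla u_\ve)\cdot\nabla u_\ve\,dx = \lim_{\ve\ra 0}\int_{\O_\ve}fu_\ve\,dx = \int_{\O^-}fu_0\,dx + \int_{\O^+}fhu_0\,dx,$$
which coincides with the energy of the limit problem tested with $u_0$ once $h\,\ov\xi_1=0$ discards the horizontal part. Feeding this into $\int_{\O_\ve}(A(x,\nabla u_\ve)-A(x,\Psi_\ve))\cdot(\nabla u_\ve-\Psi_\ve)\,dx\ge 0$, evaluated on recovery gradients $\Psi_\ve$ constructed through the adjoint of the unfolding operator from $\nabla\phi$ on $\O^-$ and from pairs $(\Theta,\p_2\phi)$ on $\O^+$, passing to the limit and letting the perturbation vanish, the strict monotonicity H1) forces $\xi=A(x,\nabla u_0)$ on $\O^-$ and $(\ov\xi_1,\ov\xi_2)=A(x,\ov q,\p_2 u_0)$ on $\O^+$, whereupon $h\,\ov\xi_1=0$ becomes the second line $h\,A_1(x,\ov q,\p_2 u_0)=0$ of \eqref{eq:limitproblem}. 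I expect the main obstacle to lie precisely in this step on $\O^+$: since $\ov q$ is a genuinely free quantity and not a derivative of $u_0$, the recovery target cannot be $\nabla u_0$, the horizontal and vertical components of the Minty test gradients must be varied independently, and---most delicately---one must check that the unfolded horizontal gradient does not genuinely oscillate in the fast variable (using the connectedness of $Y(x)$ together with strict monotonicity), so that the nonlinear flux may legitimately be evaluated at the single averaged value $\ov q$ rather than at an average of microscopic fluxes.

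Finally, uniqueness of $(u_0,\ov q)$ follows once more from strict monotonicity: the relation $h\,A_1(x,\ov q,\p_2 u_0)=0$ determines $\ov q$ as a function of $\p_2 u_0$ a.e. on $\O^+$, reducing \eqref{eq:limitproblem} to a single strictly monotone problem for $u_0\in W(\O,\Gamma_b)$ admitting a unique solution. Uniqueness of the limit then upgrades the subsequential convergence to convergence of the whole family, establishing \eqref{weak}.
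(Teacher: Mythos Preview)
Your outline is correct and follows essentially the same route as the paper: uniform $H^1$ bounds via H2) and Poincar\'e, unfolding compactness, oscillating test functions to kill the averaged horizontal flux, energy passage from the equation, Minty--Browder to identify the limit fluxes, and strict monotonicity for uniqueness.

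The one place where you make life harder than necessary is what you flag as ``the main obstacle''. You propose building $y$--dependent recovery fields $(\Theta,\partial_2\phi)$ through the unfolding adjoint and then arguing, via connectedness of $Y(x)$ and strict monotonicity, that the unfolded horizontal gradient does not oscillate in $y$, so that the flux can be evaluated at the single value $\ov q$. The paper avoids all of this. In the limiting Minty inequality it uses only $y$--independent comparison fields $\Psi\in(L^2(\Omega))^2$, and on $\Omega^+$ it takes the first component equal to $\ov q$ itself. Since $A_1(x,\ov q,\partial_2 u_0-\lambda\phi_2)$ is then independent of $y$ while $\int_{Y(x)}\bigl(h(x)q(x,y)-\ov q(x)\bigr)\,dy=0$, the cross term
\[
\int_{\Omega_u^+} A_1\bigl(x,\ov q,\partial_2 u_0-\lambda\phi_2\bigr)\,\bigl(hq-\ov q\bigr)\,dx\,dy
\]
vanishes identically. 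This directly yields $\ov\zeta_2=h\,A_2(x,\ov q,\partial_2 u_0)$ and, by a second choice $\Psi=(\ov q-\lambda\phi,\partial_2 u_0)$, the constraint $h\,A_1(x,\ov q,\partial_2 u_0)=0$, with no need to establish $y$--independence of $q$ and no recovery sequence construction. So your anticipated obstacle is not an obstacle at all; the averaging trick with $\Psi_1=\ov q$ is the whole point. Connectedness of $Y(x)$ is used in the paper only to ensure $u_0^+$ is a single $y$--independent function and to justify density of smooth test functions in $W(\Omega,\Gamma_b)$, not to control~$q$.
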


\medskip

\begin{remark}
We observe that we were not able to prove that the error
of the zeroth approximation of the solutions and their 
flows converge strongly to
zero in $L^2$ restricted to the oscillating domain as in Theorem 7.1 of \cite{AiKl-PP} because of the presence of the monotone operator $A$.
\end{remark}

\section{Preliminaries}
\subsection{The periodic unfolding operator}\label{unfolding}
The only apparent possible cause of oscillations in the solutions to (1) and their flows is the periodicity in the domain, in the $x_1$ direction. For the study of these oscillations, we will use the periodic unfolding method. In this section, we recall the definition and some properties  of the periodic unfolding operator for domains with highly oscillating smooth boundary, introduced for the first time in \cite{AiNaRa-CV}. To this aim let us define the fixed domain
\begin{align*}
\Omega_u^{+} & = \{ (x,y) \in \mathbb{R}^2 \times \mathbb{T} : 0 < x_1 < 1 , \,\, \eta_- (x_1 ) < x_2 < \eta(x_1, y) \} \\
& = \{ (x,y) \in \mathbb{R}^2 \times \mathbb{T} : x \in \Omega^+ , \,\, y \in Y(x) \}.
\end{align*}
\begin{definition}\label{defunf}
Let $v$ be a Lebesgue-measurable function defined in $\Omega^+_\ve$. The periodic unfolding operator $T_\ve$, acting on $v$, is defined as the following function in $\Omega_u^{+}$
\begin{align*}
(T^\ve v)(x,y) & = v \big( \ve \big[\frac{x_1}{\ve}\big]+\ve y,x_2\big),
\end{align*}
where $[\cdot]$ denotes the integer part and where $v$ is extended by zero when necessary.
\end{definition}
Let us consider the set
\begin{align*}
\Omega_\varepsilon^+ & = \left\lbrace x \in \mathbb{R}^2 : 0 < x_1 < 1 ,\,\, \eta_-(x_1) < x_2 < \eta\left(x_1, \frac{x_1}{\ve}\right) \right\rbrace,
\end{align*}
i.e. the region of $\Omega_\ve$ where coefficients in \eqref{NHP-1} are periodic. 
Using the previous change of variables, the characteristic function of the region of $\Omega_\ve$, $\chi_{\Omega^\ve_+}$, gives $\chi_{\Omega_\ve^u}$, the characteristic function of the domain
\begin{align*}
\Omega_\ve^u & = \left\lbrace (x,y) \in \mathbb{R}^2 \times \mathbb{T} : 0 < x_1 < 1 ,\,\, \eta_-\left(\ve \left[\frac{x_1}{\ve}\right] + \ve y \right) < x_2 < \eta\left(\ve \left[\frac{x_1}{\ve}\right] + \ve y, y\right) \right\rbrace.
\end{align*}
There holds
\begin{align}\label{eq:strong2scale}
T^\ve \chi_{\Omega_\ve^+} & = \chi_{\Omega_\ve^u} \to \chi_{\Omega_u^{+}} \quad \text{ strongly in } L^p(\mathbb{R}^2 \times \mathbb{T}), \quad 1 \le p < \infty.
\end{align}

The property~\eqref{eq:strong2scale} is the strong unfolding convergence of the sequence and it is
useful in the passage from periodic domain to a fixed domain in integrals.
It expresses that $\chi_{\Omega_+^\ve}$ converges weakly in $L^p(\mathbb{R}^2)$ while not strongly,
and that the oscillation spectrum of the sequence belongs to the integers if not empty.
To obtain~\eqref{eq:strong2scale}, one uses the almost everywhere pointwise convergence of $\chi_{\Omega^\ve_u}$ to $\chi_{\Omega_u^{+}}$ and the Lebesgue dominated
convergence theorem, or views it as a consequence of Lemma~\ref{lm:unfolding} below.

The cost of replacing in integrals the $\ve$ depending unfolded domain $\Omega_\ve^u$ with
the fixed domain $\Omega_u^{+}$ is described by the following lemma (see \cite{AiKl-PP} for details).
\begin{lem}\label{lm:unfolding}
Let $\Omega$ contain $\Omega^\ve_+$.
Suppose that $\| v^\ve \|_{L^p( \Omega )} \le C$ and $p > 1$.
Then
\begin{align*}
\int_{\Omega^\ve_+} v^\ve \,dx & = \int_{\Omega_u^{+}} T^\ve v^\ve \,dxdy + O(\ve^{1 - 1/p}),
\end{align*}
as $\ve$ tends to zero.
\end{lem}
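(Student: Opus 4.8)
The plan is to route the computation through an \emph{exact} integration identity on the $\ve$-dependent unfolded domain $\Omega_\ve^u$ and then to estimate the cost of replacing $\Omega_\ve^u$ by the fixed domain $\Omega_u^{+}$. First I would establish the exact rule
\[
\int_{\Omega_\ve^+} v^\ve\,dx = \int_{\Omega_\ve^u} T^\ve v^\ve\,dx\,dy .
\]
Since $\ve = 1/k$ with $k$ an integer, the interval $(0,1)$ is tiled by the $k$ complete cells $(\ve j, \ve(j+1))$, $j = 0, \dots, k-1$, with no incomplete cell at the right endpoint. On each such cell $[x_1/\ve] = j$ is constant, so $T^\ve v^\ve(x,y) = v^\ve(\ve j + \ve y, x_2)$ is independent of $x_1$; integrating in $x_1$ yields a factor $\ve$, and the change of variable $t = \ve j + \ve y$ (with $dt = \ve\,dy$), combined with the $y$-periodicity of $\eta$ and the relation $y = \{t/\ve\}$, turns the cell contribution into $\int_{\ve j}^{\ve(j+1)} \int_{\eta_-(t)}^{\eta(t, t/\ve)} v^\ve(t,x_2)\,dx_2\,dt$. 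Summing over $j$ reconstructs $\int_{\Omega_\ve^+} v^\ve\,dx$ exactly. The same computation applied to $|v^\ve|^p$ gives the norm identity $\| T^\ve v^\ve \|_{L^p(\Omega_\ve^u)} = \| v^\ve \|_{L^p(\Omega_\ve^+)}$, and a mild enlargement of the region of integration shows $\| T^\ve v^\ve \|_{L^p(\Omega_u^+)} \le \| v^\ve \|_{L^p(\Omega)} \le C$. This last bound is exactly where the hypothesis on all of $\Omega$, rather than on $\Omega_\ve^+$ alone, is needed, since on $\Omega_u^+ \setminus \Omega_\ve^u$ the unfolded function samples $v^\ve$ outside $\Omega_\ve^+$.

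With the exact identity available, I would write
\[
\int_{\Omega_u^+} T^\ve v^\ve - \int_{\Omega_\ve^+} v^\ve
= \int_{\Omega_u^+} T^\ve v^\ve - \int_{\Omega_\ve^u} T^\ve v^\ve
= \int_{\Omega_u^+ \setminus \Omega_\ve^u} T^\ve v^\ve - \int_{\Omega_\ve^u \setminus \Omega_u^+} T^\ve v^\ve ,
\]
so the error is carried entirely by the symmetric difference $\Omega_u^+ \triangle \Omega_\ve^u$. Applying H\"older's inequality to each piece and invoking the two norm bounds above, the error is at most $2C\,|\Omega_u^+ \triangle \Omega_\ve^u|^{1 - 1/p}$.

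It then remains to prove $|\Omega_u^+ \triangle \Omega_\ve^u| = O(\ve)$. For fixed $(x_1,y)$ the two domains differ only in the $x_2$-fibre, whose endpoints are $\eta_-(x_1)$ and $\eta(x_1,y)$ for $\Omega_u^+$, and $\eta_-(\ve[x_1/\ve]+\ve y)$ and $\eta(\ve[x_1/\ve]+\ve y, y)$ for $\Omega_\ve^u$; the measure of the symmetric difference of two intervals is bounded by the sum of the displacements of their endpoints. Since both $x_1$ and $\ve[x_1/\ve]+\ve y$ lie in one cell of length $\ve$, their distance is at most $\ve$, so the Lipschitz continuity of $\eta$ (hence of $\eta_\pm$) in the first variable, with constant $L$, bounds each fibre contribution by $2L\ve$. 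Integrating over $(x_1,y) \in (0,1)\times\mathbb{T}$ gives $|\Omega_u^+ \triangle \Omega_\ve^u| \le 2L\ve$, and the error becomes $O(\ve^{1-1/p})$, as claimed. I expect the main obstacle to be the bookkeeping in the exact identity, specifically keeping the zero-extension of $v^\ve$ consistent so that the $L^p$ bound survives on the enlarged domain $\Omega_u^+$, and using $\ve = 1/k$ explicitly to exclude a boundary layer of incomplete cells that would otherwise contribute a spurious error of the same order.
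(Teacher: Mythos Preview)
Your argument is correct and complete. The paper does not actually prove this lemma; it merely cites \cite{AiKl-PP} for the details. Your route---the exact integration identity on $\Omega_\ve^u$, H\"older on the symmetric difference $\Omega_u^+ \triangle \Omega_\ve^u$, and the $O(\ve)$ bound on that measure via the Lipschitz continuity of $\eta$ in its first variable---is precisely the standard one and is presumably what the cited reference contains. Your remarks about using $\ve = 1/k$ to avoid boundary cells and about needing the $L^p(\Omega)$ bound (not just $L^p(\Omega_\ve^+)$) to control $T^\ve v^\ve$ on $\Omega_u^+ \setminus \Omega_\ve^u$ are both on point; the cleanest way to justify $\|T^\ve v^\ve\|_{L^p(\Omega_u^+)} \le \|v^\ve\|_{L^p(\Omega)}$ is indeed to enlarge to the full strip $(0,1)\times\mathbb{R}\times\mathbb{T}$ with the zero extension, where the exact identity applies.
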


\medskip

As a consequence of the previous lemma, we can easily prove some important properties enjoyed by $T_\ve$.

\begin{proposition}\label{properties}
The unfolding operator $\mathcal{T_\ve}$ has the following properties:
\begin{itemize}
\item[i)] For any $\ep>0$, $T_\ve$ is linear. Further, for any measurable functions $u,v:\O^+_\ep\to \mathbb{R}$, it holds $$T_\ve(uv) = T_\ve(u)T_\ve(v).$$
\item[ii)] Let $u\in H^1(\Omega_\ve^+)$. Then $ \frac{\partial}{\partial x_2}T_\ve u $ and $\frac{\partial}{\partial y}T_\ve u$ belong to $L^2(\Omega_u^{+})$ and 
\begin{equation}\label{x2}
 \frac{\partial}{\partial x_2}T_\ve u =  T_\ve\Big(\frac{\partial u}{\partial x_2}\Big),
 \end{equation}
\begin{equation}
 \frac{\partial}{\partial y}T_\ve u = \ve T_\ve\Big(\frac{\partial u}{\partial x_1}\Big).
 \end{equation}\label{x1}
\item[iii)] Let $u\in L^2(\O^+)$. Then $T^\ep u \to u$ in $L^2(\O_u^{+})$. More generally, let $\{u^\ve\}$ be a sequence of functions in $L^2(\O^+)$,  such that
 $$ u_\ve \to u \quad \hbox{strongly in } L^2(\O^+).$$
 Then
 $$T_\ve (u_\ve) \to u \quad \hbox{strongly in } L^2\big(\O_u^{+}\big).$$
 \end{itemize}
\end{proposition}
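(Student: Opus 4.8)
The plan is to obtain i) and ii) directly from the affine structure of the substitution defining $T_\ve$, and to reduce iii) to a density argument once a uniform $L^2$-bound for $T_\ve$ is available. Property i) is immediate from Definition~\ref{defunf}: the quantity $(T_\ve v)(x,y)$ is nothing but the evaluation of $v$ at the single point $(\ve[\frac{x_1}{\ve}]+\ve y,\,x_2)$, and pointwise evaluation is linear in $v$ and multiplicative, so $T_\ve(uv)=T_\ve(u)\,T_\ve(v)$.

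For ii) I would note that on each cell $\{\ve j\le x_1<\ve(j+1)\}$ the integer part $[\frac{x_1}{\ve}]$ is the constant $j$, so the substitution reduces to $(T_\ve u)(x,y)=u(z_1,z_2)$ with $z_1=\ve j+\ve y$ and $z_2=x_2$, which is affine in the variables $y$ and $x_2$. Differentiating for smooth $u$ and using $\partial z_1/\partial x_2=0$, $\partial z_2/\partial x_2=1$, $\partial z_1/\partial y=\ve$, $\partial z_2/\partial y=0$ gives at once $\partial_{x_2}(T_\ve u)=T_\ve(\partial_{x_2}u)$ and $\partial_y(T_\ve u)=\ve\,T_\ve(\partial_{x_1}u)$. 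Since $\Omega_\ve^+$ is Lipschitz, smooth functions are dense in $H^1(\Omega_\ve^+)$; taking $u_n\to u$ in $H^1(\Omega_\ve^+)$ and applying the $L^2$-continuity of $T_\ve$ proved below to each side, the identities \eqref{x2} and \eqref{x1} pass to the limit and simultaneously show that $\partial_{x_2}(T_\ve u)$ and $\partial_y(T_\ve u)$ lie in $L^2(\Omega_u^+)$.

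Property iii) is the substantive one. First I would establish the uniform bound $\|T_\ve w\|_{L^2(\Omega_u^+)}\le\|w\|_{L^2(\Omega^+)}$: applying i) to $w^2$ and carrying out the cellwise change of variables $z_1=\ve[\frac{x_1}{\ve}]+\ve y$, the $x_1$-integration over each cell produces the Jacobian factor $\ve$, and because the vertical range appearing in $\Omega_u^+$ is contained in the one defining $\Omega^+$ after the substitution, the cell contributions sum to at most $\|w\|_{L^2(\Omega^+)}^2$ (Lemma~\ref{lm:unfolding} gives the corresponding sharper identity with an $O(\ve^{1/2})$ boundary-layer error). Next, for $u\in C(\overline{\Omega^+})$ the convergence is pointwise: from $|\ve[\frac{x_1}{\ve}]+\ve y-x_1|\le\ve$ one gets $(T_\ve u)(x,y)\to u(x_1,x_2)$ a.e., and together with the domain convergence $\chi_{\Omega_\ve^u}\to\chi_{\Omega_u^+}$ of \eqref{eq:strong2scale} and the uniform majorant $\|u\|_\infty$, dominated convergence yields $T_\ve u\to u$ in $L^2(\Omega_u^+)$. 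A density argument then covers a general $u\in L^2(\Omega^+)$: choosing continuous $u_\delta$ with $\|u-u_\delta\|_{L^2(\Omega^+)}<\delta$ and splitting $T_\ve u-u=T_\ve(u-u_\delta)+(T_\ve u_\delta-u_\delta)+(u_\delta-u)$, the first and third terms are at most $\delta$ by the uniform bound while the middle term tends to zero, so $\limsup_\ve\|T_\ve u-u\|_{L^2(\Omega_u^+)}\le 2\delta$ for every $\delta$. Finally, for $u_\ve\to u$ strongly in $L^2(\Omega^+)$, the splitting $T_\ve u_\ve-u=T_\ve(u_\ve-u)+(T_\ve u-u)$ combined with the uniform bound on the first term and the case already proved gives $T_\ve u_\ve\to u$ in $L^2(\Omega_u^+)$.

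The step I expect to require the most care is the uniform $L^2$-bound together with the dominated-convergence passage, because both must cope with the mismatch between the $\ve$-dependent unfolded domain $\Omega_\ve^u$ and the fixed domain $\Omega_u^+$: it is precisely the Lipschitz continuity of $\eta$, encoded in the domain convergence \eqref{eq:strong2scale} and in Lemma~\ref{lm:unfolding}, that keeps this discrepancy of order $\ve$ and lets the boundary-layer contributions be absorbed. Once these two ingredients are invoked, the remaining manipulations are routine.
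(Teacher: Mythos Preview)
Your proposal is correct and follows essentially the same route as the paper: the paper's proof is a two-line sketch stating that i) and ii) are immediate from Definition~\ref{defunf} and Lemma~\ref{lm:unfolding}, and that iii) follows from the analogous result in \cite{AiNaRa-CV} together with the triangle inequality. Your argument is precisely a fleshed-out version of this sketch---the chain rule for ii), the uniform $L^2$-bound plus density and dominated convergence for the first part of iii), and the final splitting $T_\ve u_\ve-u=T_\ve(u_\ve-u)+(T_\ve u-u)$ is exactly the ``simple use of triangular inequality'' the paper invokes---and you correctly isolate the domain mismatch $\Omega_\ve^u$ versus $\Omega_u^+$, handled by \eqref{eq:strong2scale} and Lemma~\ref{lm:unfolding}, as the only non-routine point.
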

\begin{proof} Properties $i)$ and $ii)$ are easy consequences of Definition \ref{defunf} and Lemma \ref{lm:unfolding}. The proof of $iii)$ is a consequence of the analogous of  \cite{AiNaRa-CV} by a simple use of triangular inequality.
\end{proof}
\subsection{Compactness results}
\begin{lem}\label{lemest}
For any fixed $\varepsilon$, let $u_\ve \in H^1(\Omega^\ve, \Gamma_b)$ be the unique solution to \eqref{NHP-1}. Under hypotheses $H1) - H3)$, we get the following uniform estimates
\begin{equation}\label{est}
\left\{
\begin{array}{ll}
i)&  \|u_\ep\|_{L^{2}(\O_\ep)} \leq C,\\
\\
ii)&  \|A(x,\nabla u_\ep)\|_{L^{2}(\O_\ep)} \leq C,
\end{array}
\right.
\end{equation}
for a positive constant $C$ independent of $\varepsilon$.
\end{lem}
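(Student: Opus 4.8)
Prove uniform a priori estimates for solutions of the monotone problem on the oscillating domain.

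The plan is to derive both estimates from the weak formulation (WF) by testing with the solution itself and using the structural hypotheses H2) and H3). Let me sketch the standard energy argument.

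First I would test the weak formulation with $\phi = u_\ve$, which is admissible since $u_\ve \in H^1(\Omega_\ve,\Gamma_b)$. This gives
\[
\int_{\Omega_\ve} A(x,\nabla u_\ve)\cdot\nabla u_\ve\,dx = \int_{\Omega_\ve} f u_\ve\,dx.
\]
Then I would bound the left-hand side from below using the coercivity hypothesis H2): since $A(x,\xi)\cdot\xi \ge c_0|\xi|^2 - k(x)$, we get
\[
\int_{\Omega_\ve} A(x,\nabla u_\ve)\cdot\nabla u_\ve\,dx \ge c_0\|\nabla u_\ve\|_{L^2(\Omega_\ve)}^2 - \|k\|_{L^1(\Omega_\ve)}.
\]
For the right-hand side, I would apply Cauchy--Schwarz and then a Poincar\'e-type inequality. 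The key geometric point is that because $\eta$ is strictly positive and Lipschitz, the domains $\Omega_\ve$ are uniformly bounded (contained in the fixed domain $\Omega$) and each function vanishes on the bottom boundary $\Gamma_b$; integrating along vertical segments from $\Gamma_b$ yields a Poincar\'e inequality $\|u_\ve\|_{L^2(\Omega_\ve)} \le C_P\|\partial_2 u_\ve\|_{L^2(\Omega_\ve)} \le C_P\|\nabla u_\ve\|_{L^2(\Omega_\ve)}$ with a constant $C_P$ depending only on $\sup\eta_+$, hence independent of $\ve$. Combining with Young's inequality $\int f u_\ve \le \frac{c_0}{2}\|\nabla u_\ve\|^2/C_P^{-2}\cdot(\dots)$ to absorb the gradient term, I obtain the uniform bound $\|\nabla u_\ve\|_{L^2(\Omega_\ve)} \le C$, and then estimate $i)$ follows immediately from the Poincar\'e inequality.

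For estimate $ii)$, I would invoke the growth hypothesis H3): from $|A(x,\nabla u_\ve)| \le c_1|\nabla u_\ve| + k(x)$ one gets directly
\[
\|A(x,\nabla u_\ve)\|_{L^2(\Omega_\ve)} \le c_1\|\nabla u_\ve\|_{L^2(\Omega_\ve)} + \|k\|_{L^2(\Omega_\ve)} \le C,
\]
using the gradient bound already established and $k\in L^2(\Omega)$.

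I expect the only delicate point to be justifying that the Poincar\'e constant is uniform in $\ve$. The clean way is to note that the zero-extended functions $\widetilde{u_\ve}$ lie in $H^1(\Omega,\Gamma_b)$ (vanishing on $\Gamma_b$) since $u_\ve$ vanishes on the bottom and the extension by zero across the graph is consistent with the homogeneous Neumann data; then a single Poincar\'e inequality on the fixed domain $\Omega$ applies to all of them simultaneously. Everything else is the textbook monotone-operator energy estimate, so no further obstacle is anticipated.
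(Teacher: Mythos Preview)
Your approach is essentially identical to the paper's: test the weak formulation with $u_\ve$, use H2) for coercivity, Poincar\'e to control the right-hand side and deduce a uniform gradient bound, then H3) for the flux estimate. One correction to a side remark: your alternative justification of the uniform Poincar\'e constant via zero extension is not valid, because $u_\ve$ satisfies a Neumann (not Dirichlet) condition on $\partial\Omega_\ve\setminus\Gamma_b$, so its trace there is generally nonzero and $\widetilde{u_\ve}\notin H^1(\Omega)$. Your first argument---integrating $\partial_2 u_\ve$ along vertical segments from $\Gamma_b$, with constant depending only on $\sup\eta_+$---is the correct one and is all you need.
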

\begin{proof} By choosing $u_\ep$ as test function in the weak formulation \eqref{WF}, by assumption $H2)$ and using Poincar\'e inequality, we have
\begin{align*}
	\left( \int_{\O^\ep} c_0  |\nabla u_\ep|^2 - k(x) \right)~dx &\leq C_P \|f\|_{L^{2}(\Omega_\ep)} \left(\int_{\Omega_\ep} |\nabla u_\ep|^2 dx \right)^{1/2}.
\end{align*}
That is
\begin{equation*}
\dfrac{ c_0	 \displaystyle\int_{\O_\ep} |\nabla u_\ep|^2~ dx - \int_{\O_\ep} k ~dx}{\left(\displaystyle\int_{\Omega_\ep} |\nabla u_\ep|^2 dx \right)^{1/2}} \leq C_P \|f\|_{L^{2}(\Omega)}.
\end{equation*}
This shows that 
\begin{equation}\label{estgrad}
\|\nabla u_\ep\|_{L^{2}(\O_\ep)} \leq C.
\end{equation}
for some constant $C>0$ independent of $\ep$. 
Hence, by \eqref{estgrad} and Poincar\'e inequality, we get \eqref{est}i). Moreover, by \eqref{estgrad} and assumption $H3)$, we get \eqref{est}ii).
\end{proof}
\begin{lem}\label{lemestunf}
For any fixed $\varepsilon$, let $u_\ve \in H^1(\Omega^\ve, \Gamma_b)$ be the unique solution to \eqref{NHP-1}. Under hypotheses $H1)- H3)$, we get the following uniform estimates for the unfolded sequences
\begin{equation}\label{estunf}
\left\{
\begin{array}{ll}
i)&  \|T_\ve u_\ep\|_{L^{2}(\Omega_u^{+})} \leq C,\\
\\
ii)&  \|T_\ve \nabla u_\ep\|_{L^{2}(\Omega_u^{+})} \leq C,\\
\\
iii)&  \|T_\ve(A(\cdot,\nabla u_\ep))\|_{L^{2}(\Omega_u^{+})} \leq C,
\end{array}
\right.
\end{equation}
for a positive constant $C$ independent of $\varepsilon$.
\end{lem}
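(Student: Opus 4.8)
The plan is to derive all three estimates from the uniform $L^2(\O_\ep)$ bounds already established in Lemma~\ref{lemest}, transferring them through the unfolding operator, whose essential feature here is that it is (almost) an $L^2$-isometry from $\Omega^\ve_+$ onto $\Omega_u^{+}$. First I would exploit the multiplicativity in Proposition~\ref{properties}~i): for any measurable $w$ on $\O^+_\ep$ one has $T_\ve(w^2)=(T_\ve w)^2$, so that for each scalar quantity $w_\ep\in\{u_\ep,\ \p u_\ep/\p x_1,\ \p_2 u_\ep,\ A_1(\cdot,\nabla u_\ep),\ A_2(\cdot,\nabla u_\ep)\}$ one can write $\|T_\ve w_\ep\|_{L^2(\Omega_u^{+})}^2=\int_{\Omega_u^{+}} T_\ve(w_\ep^2)\,dxdy$. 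The three claimed bounds \eqref{estunf} then reduce to a uniform bound on this quantity for each such $w_\ep$, summed over the finitely many components.

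Next I would invoke the unfolding integration formula of Lemma~\ref{lm:unfolding} to replace the unfolded integral by one over the $\ve$-dependent domain $\Omega^\ve_+$, namely $\int_{\Omega_u^{+}} T_\ve(w_\ep^2)\,dxdy=\int_{\Omega^\ve_+} w_\ep^2\,dx+r_\ep$, where $r_\ep$ collects the discrepancy coming from the incomplete $\ve$-cells adjacent to $x_1=0$ and $x_1=1$ and from the mismatch between $\Omega^\ve_+$ and $\Omega_u^{+}$ recorded in \eqref{eq:strong2scale}. Since $\Omega^\ve_+\subset\Omega_\ep$, the leading term is dominated by $\|w_\ep\|_{L^2(\O_\ep)}^2$, and Lemma~\ref{lemest} supplies exactly the uniform bounds required: estimate \eqref{est}i) for $w_\ep=u_\ep$ yields \eqref{estunf}i), estimate \eqref{estgrad} for the two components of $\nabla u_\ep$ yields \eqref{estunf}ii), and estimate \eqref{est}ii) for the two components of $A(\cdot,\nabla u_\ep)$ yields \eqref{estunf}iii).

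Controlling $r_\ep$ is the one genuinely delicate point. The hypothesis of Lemma~\ref{lm:unfolding} asks for an $L^p$ bound with $p>1$ on the integrand, whereas $w_\ep^2$ is only uniformly bounded in $L^1$ when $w_\ep$ is only uniformly bounded in $L^2$ — precisely the situation for $\nabla u_\ep$ and for $A(\cdot,\nabla u_\ep)$, where no higher integrability is available. I would therefore not rely on the quantitative $O(\ve^{1-1/p})$ rate but argue directly from the geometry of Definition~\ref{defunf}: on each complete cell the change of variables $s=\ve[x_1/\ve]+\ve y$ is measure preserving in the integrated sense, so $\int_{\text{cell}\times\mathbb{T}} T_\ve(w_\ep^2)\,dxdy$ equals $\int_{\text{cell}} w_\ep^2\,dx$ exactly, and the entire remainder $r_\ep$ is carried by the two boundary strips of $x_1$-width at most $\ve$. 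Consequently $|r_\ep|$ is majorized by $\|w_\ep\|_{L^2(\O_\ep)}^2\le C$, which is all that is needed here: it keeps $\|T_\ve w_\ep\|_{L^2(\Omega_u^{+})}^2$ uniformly bounded even though it need not tend to zero. Combining these bounds over the components produces the constant $C$ in \eqref{estunf}, independent of $\ve$, and completes the proof.
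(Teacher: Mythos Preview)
Your proposal is correct and follows essentially the same route as the paper: transfer the uniform $L^2(\Omega_\ep)$ bounds of Lemma~\ref{lemest} (namely \eqref{est}i), \eqref{estgrad}, and \eqref{est}ii)) to the unfolded domain $\Omega_u^{+}$ via Lemma~\ref{lm:unfolding}. The paper's proof is a one-line citation of exactly these ingredients; your version is more explicit and, in particular, you flag and resolve the $p>1$ hypothesis issue (applying Lemma~\ref{lm:unfolding} to $w_\ep^2$, which is only $L^1$-bounded) by appealing directly to the measure-preserving nature of the unfolding on complete cells --- a point the paper leaves implicit.
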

\begin{proof}
By splitting the domain $\Omega_\ep$ into the periodic  part $\Omega_\ve^+$ and the fixed part $\Omega^-$ and using  Lemma \ref{lm:unfolding}, we get \eqref{estunf}i),  \eqref{estunf}ii) and \eqref{estunf}iii) as a consequence of \eqref{est}i), \eqref{estgrad} and \eqref{est}ii) respectively.
\end{proof}

\section{Proof of Theorem \ref{tm:homogenization}. }
In this section, we establish the convergence of problem \eqref{NHP-1} to the homogenized problem \eqref{eq:limitproblem} in the sense of weak convergence of the solutions and their flows. To this aim, we will use the unfolding method whose definition and properties, we recalled in the previous section. More in particular we will use lemma \ref{lm:unfolding} to pass from the domain $\Omega_\varepsilon^+$ to the fixed domain $\Omega_u^{+}$, and the weak compactness results stated in lemmas \ref{lemest} and \ref{lemestunf} to characterize the asymptotic behavior of $u_\varepsilon$.
The proof of Theorem \ref{tm:homogenization} will be developed into six steps.\\

\textbf{Step 1.} \textit{Weak convergences}\\
By weak compactness, stated in  lemmas \ref{lemest} and \ref{lemestunf}, there exist $u_0^- \in H^1(\O^-)$ having zero trace on $\Gamma_b$, $u_0^+\in L^2(\O_u^{+})$, $(hq,d)\in (L^2(\O_u^{+}))^2$, $\zeta:=(\zeta_1,\zeta_2)\in (L^2(\O_u^{+}))^2$, $\tau\in (L^2(\O^-))^2$ and a subsequence of $\ep$, still denoted by $\ep$, such that the following convergences hold
\begin{equation}\label{conv}
\left\{
\begin{array}{lll}
i)&u_\ep\w u_0^- &~\text{weakly in } H^1(\O^-,\Gamma_b)\\
ii)&T^\ep u_\ep \w u_0^+ &~\text{weakly in } L^2(\O_u^{+}),\\
iii)&T^\ep \nabla u_\ep \w (hq,d) &~\text{weakly in } L^2(\O_u^{+}),\\
iv)& A(x,\nabla u_\ep) \w \tau &~\text{weakly in } L^{2}(\O^-),\\
v)&T^\ep A(x,\nabla u_\ep) \w \zeta &~\text{weakly in } L^{2}(\O_u^{+}).
\end{array}
\right.
\end{equation}
Since the set $M=\{x\in \Omega^+:\,h(x)=0\}$ has zero measure, we have $hq\in L^2(\Omega_u^{+})$. We want to prove that in \eqref{conv}, $u_0^+$ is independent of $y$. To this aim, let us observe that from \eqref{estunf}ii) and \eqref{x1} in Proposition \ref{properties}, we get
\begin{equation}\label{dery}
\dfrac{\partial}{\partial y}(T^\ep  u_\ep) \to 0 \text{ strongly in } L^2(\O_u^{+}).
\end{equation}
Hence the definition of weak derivative implies
$$
\int_{\Omega_u^{+}}\dfrac{\partial}{\partial y}(T^\ep  u_\ep)\varphi\,dx\,dy=-\int_{\Omega_u^{+}}T^\ep  u_\ep\dfrac{\partial \varphi}{\partial y}\,dx\,dy\quad \forall \varphi \in \mathcal{D}(\Omega_u^{+})
$$
and as $\ep$ goes to zero by \eqref{conv}ii) and \eqref{dery}, we obtain
$$
\int_{\Omega_u^{+}}u_0^+\dfrac{\partial\varphi}{\partial y}\,dx\,dy=0\quad \forall \varphi \in \mathcal{D}(\Omega_u^{+}),
$$
which means $u_0^+$ is independent of $y$.\\
By \eqref{x2} in Proposition \ref{properties} and $\eqref{conv}ii)$, or taking into account the the density of $C^\infty_0$ functions in $L^2$, we easily get $d=\p_2 u_0^+$.
%Let $\varphi\in \mathcal{D}(\Omega^+)$ and observe that, \textcolor{red}{according to the homogeneous boundary condition satisfied by $u_\ep$}, we have
%$$
%\int_{\Omega_\ep^+}\dfrac{\partial u_\ep}{\partial x_2}\varphi \,dx=-\int_{\Omega_\ep^+}u_\ep\dfrac{\partial \varphi}{\partial x_2}\,dx\,dy
%$$
%which by unfolding becomes
%$$
%\int_{\Omega_u}T^\ep \left(\dfrac{\partial u_\ep}{\partial x_2}\right)T^\ep \varphi\, dx\,dy=-\int_{\Omega_u}T^\ep u_\ep T^\ep\left(\dfrac{\partial \varphi}{\partial x_2}\right)\,dx\,dy.
%$$
%By $\eqref{conv}ii)$, $\eqref{conv}iii)$ and $iii)$ in Proposition \ref{properties}, when $\ep$ goes to zero, we get
%$$
%\int_{\Omega_u}d\, \varphi\, dx\,dy=-\int_{\Omega_u}u_0^+ \dfrac{\partial \varphi}{\partial x_2}\,dx\,dy=\int_{\Omega_u}\dfrac{\partial u_0^+}{\partial x_2}\varphi\,dx\,dy\quad \forall \varphi \in \mathcal{D}(\Omega^+).
%$$
%\textcolor{red}{which should imply $d=\dfrac{\partial u_0^+}{\partial x_2}$ but control please since there something I can't understand!}

Following the same argument as in \cite{AiKl-PP}, denoted by 
\begin{equation}\label{u0}
u_0=u_0^-\chi_{\O^-}+u_0^+\chi_{\O^+},
\end{equation}
we get $u_0\in W(\Omega, \Gamma_b)$. Hence in what follows, where no ambiguity arises, we can use $u_0$ in place of $u_0^+$ and $u_0^-$ respectively.
\medskip

\textbf{Step 2.} We want to prove that 
\begin{equation}\label{mean}
\ov{\zeta}_1(x)=\int_{Y(x)}\zeta_1~dy=0 \text{ a. e. } x\in\O^+.
\end{equation}
To this aim, we may use oscillating test functions as in \cite{AiKl-PP}. More precisely, let us take $\varphi \in \CD(\O^+)$ and consider the function $\phi_\ep\in H^1(\O_\ep^+)$ satisfying the following convergences
\begin{equation}\label{test}
\left\{
\begin{array}{lll}
i)&T^\ep \phi_\ep \to 0& \text{strongly in } L^2(\O_u^{+})\\
\\
ii)&T^\ep \nabla \phi_\ep \to (\varphi,0)& \text{strongly in } L^2(\O_u^{+}).
\end{array}
\right.
\end{equation}
Choosing $\phi=\phi_\ep$ as test function in the variational formulation \eqref{WF} and passing to the unfolding operator, by Lemma \ref{lm:unfolding}, we get
\begin{align}\label{UnfWeakForm}
 \int_{\O_u^{+}} T^\ep A(x, \nabla u_\ep) T^\ep \nabla \phi_\ep ~dx dy =\int_{\O_u^{+}} T^\ep f~ T^\ep \phi_\ep ~dx dy + o(1).
\end{align}
By $iii)$ in Proposition \ref{properties}, $\eqref{conv}v)$, $\eqref{test}i)$ and  $\eqref{test}ii)$, the equation \eqref{UnfWeakForm} becomes     

\begin{align*}
\int_{\O_u^{+}} \zeta_1 \varphi ~dx dy = 0,\,\forall \varphi \in \CD(\O^+),
\end{align*}
which implies $\ov{\zeta}_1=0$ almost everywhere in $\O^+$. 
\medskip

\textbf{Step 3.} \textit{Monotone relation}\\
This step is devoted to prove that for every $\Psi=(\psi_1,\psi_2)\in  (L^2(\O))^2$ the following inequality holds
\begin{equation} \label{mono-inequabis}
 \int_{\O^-} (\tau-  A(x, \Psi)) \cdot (\nabla u_0 - \Psi)  ~dx 
 + \int_{\O_u^{+}} \zeta_2 (\p_2 u_0 - \psi_2) - A(x, \Psi) ( (hq, \p_2 u_0) - \Psi)  ~dxdy \geq 0,
 \end{equation}
which will enable us to identify the functions $hq$, $\tau$ and $\zeta_2$ in $\eqref{conv}iii)$, $\eqref{conv}iv)$ and $\eqref{conv}v)$ respectively, and to derive the equation satisfied by $u_0$ in $\O^+$.\\
To this aim, let us take $\phi\in C^\infty(\overline{\O}, \Gamma_b)$ as test function in  \eqref{WF}. By unfolding and Lemma \ref{lm:unfolding}, we obtain
\begin{align*}
\int_{\O^-} A(x, \nabla u_\ep) \nabla \phi ~dx + \int_{\O_u^{+}} T^\ep A(x, \nabla u_\ep) T^\ep \nabla \phi~dx dy = \int_{\O^-}  f~ \phi ~dx \notag\\+\int_{\O_u^{+}} T^\ep f~ T^\ep \phi ~dx dy + o(1).
\end{align*}
By $\eqref{conv}iv)$ and \eqref{mean}, we get

\begin{align} \label{Tau-zeta-eqn}
\int_{\O^-} \tau \cdot \nabla \phi ~dx + \int_{\O_u^{+}} \zeta_2\, \p_2 \phi ~dx dy =  \int_{\O^-}  f~ \phi ~dx + \int_{\O_u^{+}} f~ \phi ~dx dy
\end{align}
for all $\phi\in C^\infty(\overline{\O}, \Gamma_b)$. Now, let us use the monotonicity of $A$ and by assumption $H1)$ we get

\begin{align}
\int_{\O_\ep} ( A(x, \nabla u_\ep)-A(x, \Psi)) (\nabla u_\ep - \Psi) ~dx > 0,~~\forall \Psi \in (L^2(\O))^2.
\end{align}
By splitting the domain $\O_\ep$ into $\O^-$ and $\O_\ep^+$ and the by unfolding, we obtain
\begin{align*}
&\int_{\O^-} ( A(x, \nabla u_\ep)-A(x, \Psi)) (\nabla u_\ep - \Psi) ~dx \\
&+ \int_{\O_u^{+}} ( T^\ep A(x, \nabla u_\ep)- T^\ep A(x, \Psi)) (T^\ep \nabla u_\epsilon - T^\ep \Psi) ~dxdy +o(1) > 0,~~\forall \Psi \in (L^2(\O))^2.
\end{align*}
Hence
\begin{equation}\label{ineq1}
\begin{array}{l}
 \displaystyle\int_{\O^-} ( A(x, \nabla u_\ep) \nabla u_\ep -  A(x, \nabla u_\ep) \Psi -  A(x, \Psi) \nabla u_\ep + A(x, \Psi) \Psi )~dx\\
\\
\displaystyle+\int_{\O_u^{+}} \big( T^\ep A(x, \nabla u_\ep) T^\ep \nabla u_\ep - T^\ep A(x, \nabla u_\ep) T^\ep \Psi - T^\ep A(x, \Psi) T^\ep \nabla u_\ep \\
\\
\displaystyle+T^\ep A(x, \Psi) T^\ep \Psi \big)~dxdy +o(1) > 0,~~\forall \Psi \in (L^2(\O))^2.
\end{array}
\end{equation}

At first, let us identify the limit, as $\ep$ goes to zero, of the following term in \eqref{ineq1}
$$
\int_{\O^-}  A(x, \nabla u_\ep) \nabla u_\ep ~dx+ \int_{\O_u^{+}}  T^\ep A(x, \nabla u_\ep) T^\ep \nabla u_\ep ~dxdy.
$$
To this aim let us take $u_\ep$ as test function in \eqref{WF} and pass to the unfolding operator obtaining
\begin{equation}
\begin{array}{l}
\displaystyle \int_{\O^-}  A(x, \nabla u_\ep) \nabla u_\ep ~dx+ \int_{\O_u^{+}}  T^\ep A(x, \nabla u_\ep) T^\ep \nabla u_\ep ~dxdy \\
 \\
=\displaystyle \int_{\O^-}  f~ u^\ep ~dx + \int_{\O_u^{+}}  T^\ep f  T^\ep u_\ep ~dxdy +o(1).
\end{array}
\end{equation}
When $\ep$ tends to zero, by $iii)$ in Proposition \ref{properties}, $\eqref{conv}i)$ and$\eqref{conv}ii)$, we get
\begin{equation}\label{ineq2}
\begin{array}{lll}
&&\displaystyle\lim_{\ep \to 0} \left(\int_{\O^-}  A(x, \nabla u_\ep) \nabla u_\ep ~dx+ \int_{\O_u^{+}}  T^\ep A(x, \nabla u_\ep) T^\ep \nabla u_\ep ~dxdy\right) \\
\\
&=&\displaystyle\lim_{\ep \to 0} \left(\int_{\O^-}  f~ u^\ep ~dx + \int_{\O_u^{+}}  T^\ep f  T^\ep u_\ep ~dxdy+o(1)\right)\\
\\
&=& \displaystyle\int_{\O^-}  f~ u_0^- ~dx + \int_{\O_u^{+}}  f  ~u_0^+ ~dxdy.
\end{array}
\end{equation}
If we put $\phi=u_0$ in \eqref{Tau-zeta-eqn}, we get
\begin{equation}\label{ineq3}
 \int_{\O^-}  f~ u_0^- ~dx + \int_{\O_u^{+}}  f  ~u_0^+ ~dxdy=\int_{\O^-} \tau \cdot \nabla u_0^- ~dx + \int_{\O_u^{+}} \zeta_2 \p_2 u_0^+ ~dx dy.
\end{equation}
By \eqref{ineq2} and \eqref{ineq3}, we can write
\begin{equation}\label{ineq4}
\begin{array}{lll}
&&\displaystyle\lim_{\ep \to 0} \left(\int_{\O^-}  A(x, \nabla u_\ep) \nabla u_\ep ~dx+ \int_{\O_u^{+}}  T^\ep A(x, \nabla u_\ep) T^\ep \nabla u_\ep ~dxdy\right) \\
\\
&=& \displaystyle\int_{\O^-} \tau \cdot \nabla u_0^- ~dx + \int_{\O_u^{+}} \zeta_2 \p_2 u_0^+ ~dx dy.
\end{array}
\end{equation}
Passing to the limit as $\ep \to 0$ in \eqref{ineq1}, by \eqref{conv}, \eqref{mean} and \eqref{ineq4} we obtain
\begin{align*}
&\ds \int_{\O^-} ( \tau \cdot \nabla u_0 -  \tau \cdot \Psi -  A(x, \Psi) \nabla u_0 + A(x, \Psi) \Psi )~dx\\
& \ds+\int_{\O_u^{+}} ( \zeta_2 \p_2 u_0 - \zeta_2 \psi_2 - A(x, \Psi) (hq, \p_2 u_0) + A(x, \Psi) \Psi ) ~dxdy \geq 0,~~\forall \Psi \in (L^2(\O))^2, 
\end{align*}
which means \eqref{mono-inequabis} holds true.\\
\medskip

\textbf{Step 4.} \textit{Identification of $\zeta_2$ and $\tau$}\\

In this step, it is important to recall that $$\ov{q}(x):= \int_{Y(x)} q(x,y) ~dy$$
and that $u_0$ is independent of $y$.\\

Then, for any $\lambda >0$ and $\Phi=(\phi_1,\phi_2) \in (L^2(\O))^2$, let us choose in \eqref{mono-inequabis} 
$$\Psi= \chi_{\O^+} (\ov{q}, \p_2 u_0 - \lambda \phi_2) + \chi_{\O^-} (\nabla u_0 - \lambda \Phi).$$
By considering 
\begin{align*}
\int_{\O_u^{+}} A_1(x, \ov{q}, \p_2 u_0 - \lambda \phi_2)) (hq-\ov{q})  ~dydx=0,
\end{align*}
we get
\begin{align*}
\int_{\O^-} (\tau-  A(x, \nabla u_0 -\lambda \Phi)) \cdot \Phi  ~dx + 
\int_{\O_u^{+}} (\zeta_2  - A_2(x, \ov{q}, \p_2 u_0 - \lambda \phi_2)) \phi_2  ~dxdy \geq 0,
\end{align*}
for every  $\Phi \in (L^2(\O))^2$.\\
Thus, by assumption $H3)$, as $\lambda \to 0$, we obtain
\begin{eqnarray}\label{ineq5}
\ds \int_{\O^-} (\tau-  A(x, \nabla u_0)) \cdot \Phi  ~dx + 
\int_{\O_u^{+}} (\zeta_2 - A_2(x, \ov{q}, \p_2 u_0)) \phi _2 ~dxdy \geq 0,
\end{eqnarray}
for every  $\Phi \in (L^p(\O))^2$.\\
By choosing alternatively $\Phi\in\mathcal{D}(\O^+)$ and $\Phi\in\mathcal{D}(\O^-)$ in \eqref{ineq5}, we get, respectively,
\begin{align}\label{idzeta}
\ov{\zeta}_2 = \int_{Y(x)} A_2(x,\ov{q}, \p_2 u_0) ~dy = h(x)A_2(x,\ov{q}, \p_2 u_0) ~~\text{a. e. on}~\O^+
\end{align}
and 
\begin{align}\label{idtau}
\tau = A(x,\nabla u_0)~~\text{a. e. on}~\O^-.
\end{align}
\textbf{Step 5. } \textit{$u_0\in W(\O,\Gamma_b)$ solves the homogenized problem \eqref{eq:limitproblem}}\\

By \eqref{idzeta} and \eqref{idtau}, equation \eqref{Tau-zeta-eqn} becomes
\begin{equation}\label{ineq7}
\ds \int_{\O^-} A(x, \nabla u_0)\nabla \phi  ~dx + 
\int_{\O^+} h  A_2(x, \ov{q}, \p_2 u_0) \p_2\phi  ~dx = \int_{\O^-} f \phi ~dx + \int_{\O^+}h\,f \phi ~dx.
\end{equation}
for all $\phi\in C^\infty(\overline{\O}, \Gamma_b)$.\\

Let
\begin{align*}
\Omega_{u} & = \{ (x,y) : x \in \Omega, \, y \in Y(x) \},
\end{align*}
and
\begin{align*}
W(\Omega_{u}, \Gamma_b \times \mathbb{T}) = \{ v : \, & v \in L^2(\Omega_{u}), \, \frac{\partial v}{\partial x_1} \in L^2(\Omega_- \times \mathbb{T}), \, \frac{\partial v}{\partial x_2} \in L^2(\Omega_{u}), \\
& \nabla_y v = 0 \text{ in } \Omega_{u},\, v = 0 \text{ on } \Gamma_b \times \mathbb{T} \}.
\end{align*}
Now, the equation \eqref{ineq7} can be written as 
\begin{align}\label{eq:limitW}
\int_{\Omega_{u}} \Big(  \chi_{\Omega_- \times \mathbb{T}}A(x, \nabla u_0)\nabla \phi +  \chi_{\Omega_u^{+}} A_2(x, \ov{q}, \p_2 u_0) \p_2\phi  \Big) \,dxdy & = \int_{\Omega_{u}} f\phi \,dxdy,
\end{align}
By the density of $C^\infty(\overline{\Omega},\Gamma_b)$
in $W(\Omega_{u}, \Gamma_b \times \mathbb{T})$ (as the functions in $W(\Omega_{u}, \Gamma_b \times \mathbb{T})$ are independent of $y$),
\eqref{eq:limitW} holds for any test function in $W(\Omega_{u}, \Gamma_b \times \mathbb{T})$. Hence 
the equation~\eqref{ineq7} holds for any $\phi \in W(\Omega, \Gamma_b)$.

Again, for any $\lambda >0$ and $\phi\in L^2(\O^+)$, let us choose in \eqref{mono-inequabis}
$$\Psi= \chi_{\O^+} (\ov{q}-\lambda \phi, \p_2 u_0) + \chi_{\O^-} \nabla u_0.$$
Hence, we get
\begin{eqnarray*}
\int_{\O^+}  h(x)A_1(x,\ov{q}-\lambda \phi , \p_2 u_0)\phi  ~dxdy \geq 0,\, \forall \phi\in L^2(\O^+).
\end{eqnarray*}
Thus, as $\lambda \to 0^+$ by assumption $H3)$ it holds
\begin{eqnarray*}
\int_{\O^+} h(x)A_1(x, \ov{q}, \p_2 u_0) \phi  ~dxdy \geq 0,\,\forall \phi\in L^2(\O^+)
\end{eqnarray*}
which implies
\begin{equation}\label{idA1}
h(x)A_1(x,\ov{q},\p_2 u_0)=0  ~~\text{a. e. on}~\O^+.
\end{equation}

Finally, by putting \eqref{idzeta}, \eqref{idtau} and \eqref{idA1} in \eqref{Tau-zeta-eqn}, we get that the couple $(u_0, \ov{q}) \in W(\O,\Gamma_b) \times L^2(\O^+)$ as a solution of the following problem
\begin{eqnarray}\label{lim-weak-2}
\begin{cases}
&\ds \int_{\O^-} A(x, \nabla u_0)\nabla \phi  ~dx + 
\int_{\O^+} hA_2(x, \ov{q}, \p_2 u_0) \p_2\phi  ~dx = \int_{\O^-} f \phi ~dx + \int_{\O^+}f \phi h~dx\\
&h(x)A_1(x,\ov{q},\p_2 u_0)=0~~\text{a.e on } \O^+.
\end{cases}
\end{eqnarray}
Let us observe we cannot explicitly write the previous problem as a partial differential system of equation since when we try to retrieve the boundary data on the top of the boundary by choosing a test function $\phi \in C^\infty(\Omega^+)$, we can not get any information as $h=0$ there.\\

Now, we will show that $(u_0, \ov{q}) \in W(\O,\Gamma_b) \times L^2(\O^+)$ is the unique solution of \eqref{lim-weak-2}. To this aim, let $(u_1,q_1) \in W(\O,\Gamma_b) \times L^2(\O^+)$ be another solution of \eqref{lim-weak-2}. Then,
\begin{eqnarray*}
&&\ds \int_{\O^-} (A(x, \nabla u_0) - A(x, \nabla u_1) ) (\nabla u_0 - \nabla u_1)   ~dx \\&&~~~+ 
\int_{\O^+} h(x)(A_2(x, \ov{q}, \p_2 u_0) - A_2(x, q_1, \p_2 u_1) ) (\p_2 u_0 - \p_2 u_1)  ~dx \\&&~~~~~~~~~~~~~~~= 0\\
&& h(x)(A_1(x,\ov{q},\p_2 u_0) - A_1(x,q_1,\p_2 u_1))(\ov{q}-q_1)=0~~\text{a.e on } \O^+.
\end{eqnarray*}
This implies
\begin{eqnarray*}
&\ds \int_{\O^-} (A(x, \nabla u_0) - A(x, \nabla u_1) ) (\nabla u_0 - \nabla u_1)   ~dx \\ & \ds + 
\int_{\O^+} (A(x,\ov{q}, \p_2 u_0) - A(x,q_1, \p_2 u_1) ) ( (\ov{q},h(x)\p_2 u_0) - (q_1,h(x)\p_2 u_1))  ~dx =0.
\end{eqnarray*}
As $A$ is strictly monotone, we have $\nabla u_0 = \nabla u_1 $ in $\O^-$ and 
$ (\ov{q},h(x)\p_2 u_0) = (q_1,h(x)\p_2 u_1)) $ in $\O^+$. Now, using Poincare inequality, one can easily show that $u_0=u_1$ in $\O$ in the sense of being elements of $W(\Omega, \Gamma_b)$ and $\ov{q}=q_1$ in $\O^+$.\\

The uniqueness of the solution $u_0$ to the homogenized problem  \eqref{lim-weak-2} ensures that
the full sequences in \eqref{conv} converge.

\medskip

\textbf{Step 6. } \textit{Weak limits}\\
Weak convergences $\eqref{weak}i)$ and $\eqref{weak}ii)$, as $\ep$ goes to zero,  follow from the weak unfolding limits $\eqref{conv}ii)$ and $\eqref{conv}iii)$ respectively and by taking the average over the cell of periodicity. More in particular, since $u_0^+$ doesn't depends on $y$, we get respectively
\begin{equation}\label{w1}
\widetilde{u}_\ep \w \int_{Y(x)} u_0^+\,dy=hu_0^+\quad \text{weakly in }L^2(\O^+).
\end{equation}
and 
\begin{equation}\label{w2}
\widetilde{\nabla u}_\ep \w \int_{Y(x)} \left(hq,\frac{\partial u_0^+}{\partial{x_2}}\right)dy=\left(h\ov q, h\frac{\partial u_0^+}{\partial x_2}\right) \quad \text{weakly in }L^2(\O^+).
\end{equation}
Since $h(x)=1$ in $\O^-$, \eqref{u0}, $\eqref{conv}i)$ and \eqref{w1} imply $\eqref{weak}i)$ of Theorem \ref{tm:homogenization}, while \eqref{w2} is exactly $\eqref{weak}ii)$ of Theorem \ref{tm:homogenization}.
Finally $\eqref{weak}iii)$ of Theorem \ref{tm:homogenization} is a simple consequence of $\eqref{conv}i)$.

%\begin{thebibliography}{99}
%\bibitem{AiyNanRav1} S~Aiyappan, A~K Nandakumaran, and Ravi Prakash, \emph{Generalization of
%		unfolding operator for highly oscillating smooth boundary domain and
%		homogenization}, Calculus of Variations and Partial Differential Equations, (2018) 57: 86. https://doi.org/10.1007/s00526-018-1354-6.
%		
%\bibitem{AiyPet} S~Aiyappan, A~K Petterson \emph{Homogenization of a locally periodic oscillating boundary}, ArXiv: 1904.11692v1,
%
%\bibitem{BlaCarGau-M2AN-99}D Blanchard, L Carbone, A Gaudiello. \emph{Homogenization of a monotone problem in a domain with oscillating boundary.} M2AN Math. Model. Numer. Anal.  33  (1999),  no. 5, 1057--1070.
%
%
%\bibitem{CorDonGauPic-CANA-97}A Corbo Esposito, P Donato, A Gaudiello, C Picard,  \emph{Homogenization of the p-Laplacian in a domain with oscillating boundary.} Comm. Appl. Nonlinear Anal. 4 (1997), no. 4, 1--23
%
%
%\bibitem{GauMel-JDE-18} A Gaudiello, T A Mel'nyk, \emph{Homogenization of a nonlinear monotone problem with nonlinear Signorini boundary conditions in a domain with highly rough boundary.} J. Differential Equations 265 (2018), no. 10, 5419--5454
%
%\bibitem{showa}
%Ralph~Edwin Showalter, \emph{Monotone operators in {Banach} space and nonlinear
%  partial differential equations}, vol.~49, American Mathematical Soc., 2013.
%
%\bibitem{ZhPa-2019} V V Zhikov and S E Pastukhova, \emph{Homogenization and two-scale convergence in the Sobolev space with an oscillating exponent}, St. Petersburg Math. J,  Vol. 30  No. 2, Pages 231--251, 2019.
%\end{thebibliography}

\bibliographystyle{plain}

\bibliography{monotone}
\section*{Acknowledgement}
The fourth author would like to thank CONICYT for the financial support through FONDECYT INICIACION NO. 11180551. He would also acknowledge the support from the Facultad de Ciencias Fisicas y Matematicas, Universidad de Concepcion (Chile) as this research was initiated during the visit of the first autor there in August 2019. The second and the third authors would to show their gratitude to GNAMPA (INDAM) for all the necessary support provided.

\end{document}